\newtheorem{thm}{Theorem}[section]
\newtheorem{lem}[thm]{Lemma}
\newtheorem{prop}[thm]{Proposition}
\theoremstyle{definition}
\newtheorem{defn}[thm]{Definition}
\newtheorem{prop-def}[thm]{Proposition--Definition}
\newtheorem{Q}[thm]{Question}
\theoremstyle{remark}
\newtheorem{rmk}[thm]{Remark}
\newcommand{\cc}{{\mathbb C}}
\newcommand{\modu}{\mathscr}
\newcommand{\sh}{\mathcal}
\begin{document}

\title[Reducible CY threefolds with countably many rational curves]{Reducible Calabi-Yau threefolds with countably many rational curves}


\author[A. Zahariuc]{A. Zahariuc}
\address{Department of Mathematics, UC Davis, One Shields Ave, Davis, CA 95616}
\email{azahariuc@math.ucdavis.edu}

\subjclass[2010]{Primary 14J32, 14H10, 14H45, 14D06}

\keywords{Calabi-Yau threefold, degeneration, reducible threefold, d-semistable, rational curve, isolated curve}



\begin{abstract}
We give a class of examples of reducible threefolds of CY type with two irreducible components for which (it is reasonably easy to prove that) no family of admissible genus zero stable maps sweeps out a surface, yet such stable maps occur in infinitely many degrees.
\end{abstract}

\maketitle


\section{Introduction}

A rational curve on a smooth complex projective variety $X$ is an integral curve $C \subset X$ of geometric genus zero. We say that the rational curve $C \subset X$ is \textbf{isolated} if the map $\smash{\tilde{C} \to X}$ from the normalization doesn't fit in any nonconstant family of maps $\varphi:T \times {\mathbb P}^1 \to X$ over an integral base $T$, where such a family of maps ought to be considered constant if the image of $\varphi$ is a curve.

\begin{Q}\label{hard question}
Does there exist a Calabi-Yau threefold which contains infinitely many rational curves and all its rational curves are isolated?
\end{Q}

A negative answer would be shocking because the property is expected to hold for ``most'' Calabi-Yau threefolds. However, to the best of the author's knowledge, the answer is not actually known and the proof is likely currently out of reach. In other words, it doesn't seem to be known whether any nontrivial analogues of Clemens' conjecture \cite{[Cl86]} hold, although it is known that some fail \cite{[Vo03]}.

Further motivation for studying this question comes from work of Voisin proving that Clemens' conjecture is incompatible with a longstanding conjecture of Lang \cite{[Vo04]} now widely believed to be false. It is possible that examples which confirm Question \ref{hard question} could disprove Lang's conjecture.

The main purpose of this note is to formulate a weaker version of the question above by allowing reducible threefolds and to give an affirmative answer to it.

\begin{defn}\label{reducible CY}
A reducible Calabi-Yau threefold $X$ is a reducible, reduced, connected, separated, proper scheme over ${\mathbb C}$ of dimension $3$ with simple normal crossing singularities such that

(T1) $X$ is d-semistable \cite{[Fr83]};

(T2) the dualizing sheaf $\omega_X$ is trivial;

(T3) $\mathrm{H}^1(X,{\sh O}_X)=0$.

\noindent We say that $X$ is a \emph{two-piece reducible Calabi-Yau threefold} if it consists of two smooth irreducible components $Y_1$ and $Y_2$ intersecting transversally along a smooth surface $S$.
\end{defn}

D-semistability is a necessary, but not sufficient \cite{[PP83]}, condition for the existence of smoothings. Of course, if $X$ is a two-piece reducible (Calabi-Yau) threefold, d-semistability amounts to ${\sh N}_{S/Y_1} \otimes {\sh N}_{S/Y_2} \cong {\sh O}_S$.

Even phrasing the correct analogue of Question \ref{hard question} for singular or reducible threefolds is a hard technical matter. Luckily, since our examples will only have two components, we may use the language of relative stable maps \cite{[Li01],[Li02]} to achieve that in satisfactory generality. 

Given a two-piece reducible Calabi-Yau threefold $X=Y_1 \cup_S Y_2$, let $\overline{\modu M}({\mathfrak Y}_i^\mathrm{rel},\Gamma_i)$ be the spaces of relative stable maps to $Y_i$ of topological type $\Gamma_i$. The data in $\Gamma_i$ includes an edgeless graph $G(\Gamma_i)$ with vertices $V(\Gamma_i)$ corresponding to the connected components of the relative stable maps and roots $R(\Gamma_i)$ corresponding to the distinguished marked points, as well as the data of the arithmetic genera and degrees relative to some ample line bundles of the connected components and weights $m_{\rho}$ of the roots corresponding to the multiplicities at the distinguished marked points. Denote the evaluation morphisms at the distinguished marked points by
$$ {\mathbf q}_i : \overline{\modu M}({\mathfrak Y}_i^\mathrm{rel},\Gamma_i) \longrightarrow S^{r(\Gamma_i)}, $$
where $r(\Gamma_i) = |R(\Gamma_i)|$. Two topological types $\Gamma_1$ and $\Gamma_2$ are compatible if $r(\Gamma_1)=r(\Gamma_2)$ and any two identically indexed roots in $R(\Gamma_1)$ and $R(\Gamma_2)$ (thus corresponding to distinguished marked points which are to be glued) have equal weights. 

Since we are morally concerned with limits of genus zero maps, it is understood that all compatible pairs $(\Gamma_1,\Gamma_2)$ we will consider have zero arithmetic genus on all vertices and moreover, the glued graph $G(\Gamma_1) \cup_{R(\Gamma_1) \cong R(\Gamma_2)} G(\Gamma_2)$ is a tree. 

\begin{thm}\label{main theorem}
There exists a two-piece reducible Calabi-Yau threefold $X=Y_1 \cup_S Y_2$ which contains a countable infinite number of ``admissible rational curves.'' More precisely, it satisfies the following sufficient conditions:

(T4) $S$ contains no rational curves;

(C1) no family of relative stable maps in any geometric fiber of the distinguished evaluation morphisms ${\mathbf q}_i$
sweeps out a surface on $Y_i$ for $i \in \{1,2\}$;

(C2) for any pair $(\Gamma_1,\Gamma_2)$ of \emph{compatible} topological types, the intersection of the images of ${\mathbf q}_1$ and ${\mathbf q}_2$ is a finite set of points of $S^r$, where $r=r(\Gamma_1)=r(\Gamma_2)$;

(C3) there exist infinitely many pairs $(\Gamma_1,\Gamma_2)$ of compatible topological types for which there exist $[f_i] \in \overline{\modu M}({\mathfrak Y}_i^\mathrm{rel},\Gamma_i)({\mathbb C})$ with smooth sources, $i=1,2$, such that each $f_i$ maps birationally onto its image and without contracted components into the trivial expansion $Y_i[0] = Y_i$ and ${\mathbf q}_1([f_1]) = {\mathbf q}_2([f_2])$. 
\end{thm}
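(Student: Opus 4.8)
The plan is to exhibit a single explicit $X=Y_1\cup_S Y_2$ and to verify the conditions by making the double locus $S$ an \emph{abelian surface}, so that (T4) holds for free (every map ${\mathbb P}^1\to S$ is constant), and by engineering the two components so that their rational curves meeting $S$ are forced to be rigid. Concretely, I would realize each $Y_i$ as a smooth projective threefold carrying an abelian-surface fibration $f_i\colon Y_i\to{\mathbb P}^1$ with $\mathrm{H}^1(Y_i,{\sh O}_{Y_i})=0$ whose anticanonical class is that of a fibre, and I would take $S$ to be a common smooth fibre along which $Y_1$ and $Y_2$ are glued.

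The reducible--Calabi--Yau axioms then come almost for free. Since a fibre has trivial normal bundle, $\smash{{\sh N}_{S/Y_1}\otimes{\sh N}_{S/Y_2}\cong{\sh O}_S}$ holds automatically, giving (T1); because $S$ is linearly equivalent to $-K_{Y_i}$, adjunction gives $\omega_{Y_i}(S)\cong{\sh O}_{Y_i}$, hence $\omega_X\cong{\sh O}_X$ and (T2); and with $\mathrm{H}^1(Y_i,{\sh O}_{Y_i})=0$ the Mayer--Vietoris sequence for ${\sh O}_X\to{\sh O}_{Y_1}\oplus{\sh O}_{Y_2}\to{\sh O}_S$ forces $\mathrm{H}^1(X,{\sh O}_X)=0$, i.e. (T3). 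This reduces everything to controlling rational curves on the $Y_i$ relative to $S$.

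For (C1) and (C2) the mechanism is rigidity. A rational curve on $Y_i$ that meets $S$ cannot lie in a smooth fibre and cannot lie in any other fibre (distinct fibres are disjoint), so it is a \emph{multisection} of $f_i$; over the generic point it is a closed point of the generic fibre, an abelian surface over ${\mathbb C}({\mathbb P}^1)$. By the Lang--Néron theorem the relevant Mordell--Weil groups are finitely generated, so each homology class contains only finitely many such curves and each is rigid. Hence, for any $\Gamma_i$ with at least one root, the part of $\overline{\modu M}({\mathfrak Y}_i^{\mathrm{rel}},\Gamma_i)$ carrying boundary data on $S$ is zero-dimensional, its image under ${\mathbf q}_i$ is a finite subset of $S^{r}$ (giving (C2)), and no family with prescribed images of the distinguished points can sweep a surface (giving (C1)).

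The crux is (C3), where I would use the group structure on $S$ to match infinitely many pairs: the intersection points with $S$ of the rigid multisections on each side form \emph{countable} subgroups or orbits of $S$ (for instance the subgroup generated by a positive-rank family of sections, or the torsion points of $S$), and one must show that the two countable sets overlap in infinitely many \emph{distinct} topological types. Taking $Y_1\cong Y_2$ and matching a section with itself already produces one infinite family, and unboundedness of the degrees follows from the quadratic growth of the canonical height on the Mordell--Weil group. \textbf{The main obstacle} is the balance this forces: one needs a non-isotrivial abelian fibration over ${\mathbb P}^1$ to make the sections rigid and finitely generated, yet such a fibration must have singular fibres, and these are typically ruled, so they contribute movable rational curves that threaten (C1) through interior components carrying no boundary on $S$. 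The real work is therefore to choose the fibrations so that every curve meeting $S$ is a rigid multisection while no singular-fibre component gives a surface-sweeping family in any fibre of ${\mathbf q}_i$.
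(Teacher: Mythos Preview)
Your construction is orthogonal to the paper's, and the ``main obstacle'' you name at the end is in fact a fatal gap rather than a technicality. In your setup, a relative stable map in a fibre of ${\mathbf q}_i$ is allowed to have interior components carrying no root on $S$; such a component may sit in a singular fibre of $f_i$ and be attached at an interior node to one of your rigid multisections. Any non-isotrivial abelian-surface fibration over ${\mathbb P}^1$ must degenerate (the moduli of polarized abelian surfaces is hyperbolic, so a smooth family over ${\mathbb P}^1$ is isotrivial, killing your Lang--N\'eron argument), and the singular fibres are uniruled---already a rank-one semi-abelian degeneration compactifies to a ${\mathbb P}^1$-bundle over an elliptic curve. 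Those rulings move in a one-parameter family through any fixed point, so attaching them to a section at an interior node produces a family in a single fibre of ${\mathbf q}_i$ that sweeps a surface, contradicting (C1). The Mordell--Weil argument controls only the components that actually meet $S$, which is not enough. Your (C3) sketch is also incomplete: taking $Y_1\cong Y_2$ and ``matching a section with itself'' produces only compatible pairs where ${\mathbf q}_1$ and ${\mathbf q}_2$ share their \emph{entire} image, not merely a point, so (C2) is violated in those very degrees.

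For comparison, the paper runs the picture the other way around: $S=E\times E$ sits in $Y_i$ as an anticanonical divisor, but $Y_i$ fibres over the \emph{elliptic curve} $E$ with \emph{rational-surface} fibres (blow-ups of ${\mathbb P}^2$), so every rational curve on $Y_i$ is trapped in a single fibre of $\varphi_i\colon Y_i\to E$. Within one fibre, the trace of $S$ is an anticanonical elliptic curve, and a short properness argument shows that image-cycles with prescribed contact on that curve cannot cover the surface, giving (C1). Condition (C2) is then obtained by a calculus argument: one records the position $\lambda_v\in E$ of each vertex, differentiates the linear-equivalence constraint coming from the hyperplane class, and uses the triangle inequality at a vertex where $|\lambda_v'|$ is maximal to force all $\lambda_v'=0$. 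Condition (C3) is handled by hand, building explicit chains of lines governed by an invertible tridiagonal matrix. The point is that fibring over a curve of positive genus kills the ``movable interior component'' problem at the outset, whereas your abelian-fibration-over-${\mathbb P}^1$ model reintroduces it through the mandatory singular fibres.
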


In our example, $S$ is an abelian surface, so (T4) is trivially satisfied. Granted, this condition is not strictly necessary, but it allows to phrase the others in a cleaner way. Conditions (C1) and (C2) imply that all ``admissible rational curves'' are isolated and (C3) says that such ``curves'' occur infinitely often. 

Regarding (C1), note that by (T4), families of genus zero relative stable maps inside a given geometric fiber of the distinguished evaluation morphism ${\mathbf q}_i$ trivially also cannot sweep out surfaces on any additional component of an expansion of $Y_i$. Condition (C3) is not particularly elegant or interesting and perhaps could be skipped for the (somewhat open-ended) purposes of paper. However, it is required in the proof of the remark below, which justifies why the reducible threefolds in Theorem \ref{main theorem} count as having an ``infinite countable number of rational curves.''

\begin{rmk}
If $W \to B$ is a one-dimensional family of smooth Calabi-Yau threefolds degenerating at $0 \in B({\cc})$ to a two-piece reducible Calabi-Yau threefold $W_0$ which satisfies the conditions in Theorem \ref{main theorem}, then the (very) general member $W_b$ of the family contains a countable infinite number of rational curves. 

The fact that all rational curves on $W_b$ are isolated follows from (C1) and (C2). The fact that there are infinitely many such rational curves follows from (C3). Indeed, the main point is that, by rigidity of the degenerate stable maps, the fact that $\overline{\modu M}({\mathfrak W},\Gamma)$ admits an obstruction theory of the expected dimension $1$ \cite[\S\S1.2]{[Li02]} implies that rigid degenerate stable maps do deform to the smooth members of the family. Imitating the argument in the second half of the proof of \cite[Theorem 1.1]{[Za15]}, we deduce that all the nodes of the degenerate stable maps are smoothed out in this process, so the sources are all irreducible. It is also clear that no multiple covers are obtained in this way.
\end{rmk}

Of course, the reason why this doesn't answer Question \ref{hard question} is that d-semistability is not sufficient to ensure the existence of smoothings. The author does not know if any of the reducible threefolds constructed in the proof of Theorem \ref{main theorem} are actually smoothable. On one hand, since it is conjectured that there are only finitely many deformation classes of Calabi-Yau threefolds, the fact that the discrete parameter in our construction can a priori take infinitely many values could be interpreted as a red flag, though not necessarily. On the other hand, the fact that the general idea of the construction is simply to imitate that in \cite{[Za15]}, which is automatically smoothable, might be a positive sign. Also, it is likely possible to tweak the current construction in many ways. 

\section{Construction of the threefolds} 

Let $(E,q)$ be an elliptic curve with a marked point $q \in E(\cc)$. Let $S = E^2$ be the square with the projections to the two factors denoted by $\pi_1$ and $\pi_2$, diagonal $\Delta \subset S$ and second diagonal
$$ \Delta'=\{(p_1,p_2) \in S(\cc): p_1+p_2 \sim 2q\}^\mathrm{cl}, $$
where the ``cl'' superscript means closure. Note that 
\begin{equation}\label{diags} 
\Delta +\Delta' \sim 2\pi_1^{-1}(q) + 2\pi_2^{-1}(q).
\end{equation}
Indeed, the difference between the two divisors is linearly equivalent to $0$ on any geometric fiber of either projection, which implies that it is both a pullback of a divisor from $E$ via $\pi_1$ and a pullback of a class divisor from $E$ via $\pi_2$. However, this is clearly only possible if the two divisors classes are $0$.

Fix an integer $m$ and consider the line bundle
$$ {\sh L} = {\sh O}_S((m+3) \Delta - m \pi_2^{-1}(q)). $$
It is clear that $\mathrm{R}^1(\pi_1)_* {\sh L} = 0$ and that $(\pi_1)_* {\sh L}$ is a locally free sheaf of rank $3$. First, we check that
\begin{equation}\label{eq1}
\det (\pi_1)_* {\sh L}  = {\sh O}_E(-m(m+3)q).
\end{equation}
Since the relative tangent bundle of $\pi_1$ is trivial, the Grothendieck-Riemann-Roch theorem implies $\mathrm{ch}((\pi_1)_*{\sh L}) = (\pi_1)_* \mathrm{ch}({\sh L})$, hence
$$ c_1((\pi_1)_*{\sh L}) = (\pi_1)_* \frac{c_1({\sh L})^2}{2} = -m(m+3)[q] \in A_0(E), $$
confirming (\ref{eq1}) since the first Chern class of a vector bundle coincides with the first Chern class of the determinant bundle. 

Consider the projective bundle
$$ \pi:P_m := \mathbf{Proj}_E \mathrm{Sym} (\pi_1)_* {\sh L}\longrightarrow E. $$
The bundle comes equipped with a tautological invertible sheaf ${\sh O}_{P_m}(-1)$. 

\begin{lem}\label{normall}
If $L_{i,q} = \pi_i^{-1}(q)$, then
$$ {\sh N}_{S/P_m} = {\sh O}_S(3(m+3)\Delta - 3mL_{2,q} + m(m+3)L_{1,q}). $$
\end{lem}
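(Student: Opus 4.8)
The plan is to realize $S$ as an effective divisor in the threefold $P_m$ and then extract its normal bundle by adjunction, so that the whole computation reduces to knowing $\omega_{P_m}$ and $\omega_S$.

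First I would produce the embedding. The fiber of $\pi_1$ over $p \in E$ is $E_p = \{p\} \times E$, and since $\Delta \cdot E_p = 1$ and $\pi_2^{-1}(q) \cdot E_p = 1$, the line bundle ${\sh L}|_{E_p}$ has degree $(m+3) - m = 3$ on the elliptic curve $E_p$. A degree $3$ line bundle on an elliptic curve is very ample, hence base-point-free, so the relative evaluation map $\pi_1^* (\pi_1)_* {\sh L} \to {\sh L}$ is surjective (using $\mathrm{R}^1(\pi_1)_* {\sh L} = 0$ together with cohomology and base change, just as invoked before (\ref{eq1})). By the universal property of $P_m = \mathbf{Proj}_E \mathrm{Sym}\,(\pi_1)_* {\sh L}$, this surjection defines a morphism $j : S \to P_m$ over $E$ with $j^* {\sh O}_{P_m}(1) = {\sh L}$; because ${\sh L}|_{E_p}$ is very ample, $j$ restricts to a closed immersion $E_p \hookrightarrow {\mathbb P}^2$ on each fiber, so $j$ is a proper, fiberwise-closed immersion and therefore a closed immersion. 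In particular $S$ is an effective Cartier divisor in the smooth threefold $P_m$, and $\smash{{\sh N}_{S/P_m} = {\sh O}_{P_m}(S)|_S}$ is a line bundle, consistent with the shape of the claim.

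Next I would compute $\omega_{P_m}$. The relative Euler sequence $0 \to \Omega_{P_m/E} \to \pi^*(\pi_1)_* {\sh L} \otimes {\sh O}_{P_m}(-1) \to {\sh O}_{P_m} \to 0$ gives, upon taking determinants of a rank-$3$ bundle, $\omega_{P_m/E} = {\sh O}_{P_m}(-3) \otimes \pi^* \det (\pi_1)_* {\sh L}$. Since $E$ is elliptic, $\omega_E \cong {\sh O}_E$, so $\omega_{P_m} = \omega_{P_m/E}$, and substituting (\ref{eq1}) yields $\omega_{P_m} = {\sh O}_{P_m}(-3) \otimes \pi^* {\sh O}_E(-m(m+3)q)$. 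Then I would apply adjunction to $S \subset P_m$: $\smash{{\sh N}_{S/P_m} = \omega_S \otimes (\omega_{P_m}|_S)^{-1}}$. As $S = E^2$ is an abelian surface, $\omega_S \cong {\sh O}_S$, leaving $\smash{{\sh N}_{S/P_m} = (\omega_{P_m}^{-1})|_S}$. Restricting via $j$ and using $j^* {\sh O}_{P_m}(1) = {\sh L}$ together with $\pi \circ j = \pi_1$, this becomes ${\sh N}_{S/P_m} = {\sh L}^{\otimes 3} \otimes \pi_1^* {\sh O}_E(m(m+3)q)$; expanding ${\sh L}^{\otimes 3} = {\sh O}_S(3(m+3)\Delta - 3m L_{2,q})$ and $\pi_1^* {\sh O}_E(m(m+3)q) = {\sh O}_S(m(m+3) L_{1,q})$ gives exactly the asserted formula.

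The only genuinely geometric step is the first one, namely checking that ${\sh L}$ is relatively very ample so that $j$ is a closed immersion and $S$ becomes a divisor; this rests on the degree-$3$ computation on the fibers of $\pi_1$ and the very ampleness of degree-$3$ bundles on elliptic curves. Everything after that is formal: the canonical bundle of a projective bundle, the triviality of $\omega_E$ and $\omega_S$, and adjunction, combined with the already-established determinant computation (\ref{eq1}). I expect no serious obstacle beyond being careful with the $\mathbf{Proj}$ sign convention so that $j^*{\sh O}_{P_m}(1) = {\sh L}$ (rather than its dual), which is precisely what makes the three final substitutions line up.
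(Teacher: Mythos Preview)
Your argument is correct and follows essentially the same route as the paper: both compute the anticanonical class of $P_m$ via the relative Euler sequence together with $\omega_E \cong {\sh O}_E$ and (\ref{eq1}), then use the triviality of $\omega_S$ (equivalently $\det{\sh T}_S$) to identify ${\sh N}_{S/P_m}$ with $(\omega_{P_m}^{-1})|_S = \det({\sh T}_{P_m}|_S)$, and finally restrict using ${\sh O}_{P_m}(1)|_S = {\sh L}$. The one thing you add is an explicit justification that $S$ actually embeds in $P_m$ with $j^*{\sh O}_{P_m}(1)={\sh L}$, which the paper leaves implicit.
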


\begin{proof}
The short exact sequence for the normal bundle of $S$ in $P_m$ implies that
$$ {\sh N}_{S/P_m} = \det ({\sh T}_{P_m}|_S). $$
The short exact sequence for the relative tangent bundle of $\pi$ gives $\det {\sh T}_{P_m} = \det {\sh T}_{\pi}$, hence ${\sh N}_{S/P_m} = \det ({\sh T}_\pi|_S)$. Moreover,
\begin{equation*}
\begin{aligned}
\det {\sh T}_\pi &= \det {\sh Hom}({\sh O}_{P_m}(-1), \pi^*[(\pi_1)_*{\sh L}]^\vee/{\sh O}_{P_m}(-1)) \\
&= {\sh O}_{P_m}(3) \otimes \pi^* \det [(\pi_1)_*{\sh L}]^\vee \\
&= {\sh O}_{P_m}(3) \otimes \pi^* {\sh O}_E(m(m+3)q)
\end{aligned}
\end{equation*}
by (\ref{eq1}), hence
\begin{equation*}
\begin{aligned}
{\sh N}_{S/P_m} &= {\sh L}^{\otimes 3} \otimes \pi_1^* {\sh O}_E(m(m+3)q) \\
&= {\sh O}_S(3(m+3)\Delta - 3mL_{2,q} + m(m+3)L_{1,q}),
\end{aligned}
\end{equation*}
as desired.
\end{proof}

\begin{lem}\label{anticanonical}
$K_P+S = 0$, that is, $S$ is an anticanonical divisor on $P$.
\end{lem}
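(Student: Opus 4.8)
The plan is to compute $K_{P_m}$ and the class of $S$ separately in $\mathrm{Pic}(P_m)$ and check that they are negatives of each other. Since $P_m$ is a ${\mathbb P}^2$-bundle over the smooth curve $E$, the Picard group splits as $\mathrm{Pic}(P_m) = {\mathbb Z}\,\xi \oplus \pi^*\mathrm{Pic}(E)$ with $\xi = c_1({\sh O}_{P_m}(1))$, so it suffices to pin down, for the class $K_{P_m}+S$, the coefficient of $\xi$ and the pullback-from-$E$ component separately and show both vanish.

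First I would dispose of the canonical class, reusing the intermediate computation in the proof of Lemma \ref{normall}. There it is shown that $\det {\sh T}_\pi = {\sh O}_{P_m}(3) \otimes \pi^*{\sh O}_E(m(m+3)q)$. Feeding this into the relative tangent sequence $0 \to {\sh T}_\pi \to {\sh T}_{P_m} \to \pi^*{\sh T}_E \to 0$ and using ${\sh T}_E = {\sh O}_E$ (so that $\det {\sh T}_{P_m} = \det {\sh T}_\pi$), I obtain $K_{P_m} = {\sh O}_{P_m}(-3) \otimes \pi^*{\sh O}_E(-m(m+3)q)$; in particular its $\xi$-coefficient is $-3$.

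For the class of $S$ I would first restrict to a fiber $F \cong {\mathbb P}^2$ of $\pi$. The embedding $S \hookrightarrow P_m$ is the one induced by ${\sh L}$, which on each $\pi_1$-fiber is a degree-$3$ bundle embedding the elliptic curve as a plane cubic; hence ${\sh O}_{P_m}(S)|_F = {\sh O}_{{\mathbb P}^2}(3)$ and the $\xi$-coefficient of $S$ is $3$. Writing $K_{P_m}+S = c\,\xi + \pi^* D$, the fiber restriction then gives $c=0$, since $K_{P_m}|_F = K_F = {\sh O}_{{\mathbb P}^2}(-3)$ (the fiber has trivial normal bundle) cancels $S|_F$. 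Thus $K_{P_m}+S = \pi^*D$ is pulled back from $E$, and it remains only to show $D=0$.

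The cleanest way to kill $D$ is adjunction: restricting to $S$ gives $\pi_1^* D = (K_{P_m}+S)|_S = K_S$, and $K_S = {\sh O}_S$ because $S=E^2$ is an abelian surface; since $\pi_1$ admits sections, $\pi_1^*$ is injective on Picard groups and $D=0$, whence $K_{P_m}+S=0$. Alternatively one can avoid adjunction and instead identify ${\sh O}_{P_m}(S)|_S$ with the normal bundle ${\sh N}_{S/P_m}$ of Lemma \ref{normall}: using $\xi|_S = {\sh L} = {\sh O}_S((m+3)\Delta - m L_{2,q})$, subtracting $3\xi|_S$ from ${\sh N}_{S/P_m}$ leaves exactly $m(m+3)L_{1,q} = \pi_1^*{\sh O}_E(m(m+3)q)$, so $D = m(m+3)q$ and ${\sh O}_{P_m}(S)$ is the inverse of $K_{P_m}$. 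The only genuinely substantive point is controlling this pullback component—the $\xi$-coefficient is immediate from the fiberwise cubic—so the crux is either the appeal to $K_S=0$ together with injectivity of $\pi_1^*$, or equivalently the bookkeeping that matches ${\sh O}_{P_m}(S)|_S$ against Lemma \ref{normall}.
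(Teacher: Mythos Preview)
Your proof is correct and essentially the same as the paper's: both observe that $S$ meets each fiber in a plane cubic so that $\omega_{P_m}(S)$ is a pullback from $E$, then use adjunction $(K_{P_m}+S)|_S = K_S = {\sh O}_S$ together with injectivity of $\pi_1^*$ to kill the remaining line bundle on $E$. The only blemish is a notational slip in your alternative paragraph, where ``$D$'' switches from the pullback component of $K_{P_m}+S$ to that of $S$ alone; the intended conclusion (that the latter is $m(m+3)q$, cancelling the explicit $K_{P_m}$) is clear.
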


\begin{proof}
By construction, $\{t\} \times E$ is a cubic hence an anticanonical divisor in $\pi^{-1}(t)$, so $\omega_{P/E}(S) = \pi^*{\sh M}$ for some line bundle ${\sh M} \in \mathrm{Pic}(E)$. However, $\det {\sh T}_{P_m} = \det {\sh T}_{\pi}$ as in the proof of Lemma \ref{normall}, hence $\omega_P(S) = \pi^*{\sh M}$ as well. By adjunction
$$ \pi_1^*{\sh M} = \omega_P(S)|_S =  \omega_S = {\sh O}_S, $$
hence ${\sh M} = {\sh O}_E$ and $\omega_P(S) = {\sh O}_P$, as desired.
\end{proof}

Assume that $m=2k$, where $k$ is a positive integer. Let $D_1$ be a divisor on $S \cong S_1$ consisting of very general\footnote{The ``very general'' assumption is actually \emph{not} essential. It is only used in \S\S3.2 and even there it is likely avoidable at the expense of further complications.} translates of $\Delta$ such that $D_1 \sim (2k^2+12k+18) \Delta$ and $D_2 \subset S_2$ consisting of very general translates of $\Delta'$ such that $D_2 \sim 2k^2\Delta'$. Let
$$ Y_i:= \text{Blowup}_{D_i} P_{2k} $$
for $i=1,2$. Let $S_i \cong S$ be the proper transform of $S$ and $\varphi_i:Y_i \to E$ the natural morphism whose fibers are rational surfaces.  

We construct the reducible threefold $X$ to be the transversal gluing of the disjoint threefolds $Y_1$ and $Y_2$ along $S_1 \cong S_2$ in such a way that the induced automorphism of $S \cong E^2$ is the involution which exchanges the two factors. One possible reference for gluing along closed subschemes is \cite[Corollary 3.9]{[Sch05]}. By a slight abuse of notation, we will continue to denote the intersection $Y_1 \cap Y_2$ by $S$. The notation for the projections to the two factors respects the notation in $Y_1$ and is opposite to that in $Y_2$. We will verify that $X$ satisfies the conditions of Theorem \ref{main theorem}.

\begin{prop}
The reducible threefold $X$ is a two-piece reducible Calabi-Yau threefold in the sense of Definition \ref{reducible CY}.
\end{prop}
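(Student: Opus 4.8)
The plan is to dispatch the scheme-theoretic hypotheses and then check (T1)--(T3) separately, treating (T1) as the crux. Each $D_i$ is a disjoint union of very general translates of a single elliptic curve ($\Delta$ or $\Delta'$) inside the smooth surface $S \subset P_{2k}$, hence is itself a smooth curve; therefore each $Y_i = \mathrm{Blowup}_{D_i} P_{2k}$ is smooth. Because $D_i$ is a Cartier divisor on $S$, the proper transform $S_i$ is isomorphic to $S$ and is a smooth (Cartier) divisor on $Y_i$. Gluing $Y_1$ and $Y_2$ transversally along $S_1 \cong S_2$ then produces a reduced, connected, separated, proper scheme of dimension $3$ whose only singularities are the normal crossings along $S$; this is exactly the ``two-piece'' structure of Definition \ref{reducible CY}.

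For (T2) I would record the blow-up formulas $\omega_{Y_i} = b_i^* \omega_{P_{2k}} \otimes {\sh O}(\mathcal{E}_i)$ and $b_i^* S = S_i + \mathcal{E}_i$ (multiplicity one since $S$ is smooth along $D_i$), where $b_i : Y_i \to P_{2k}$ is the blow-down and $\mathcal{E}_i$ its exceptional divisor. These combine to give $\omega_{Y_i}(S_i) = b_i^*\big(\omega_{P_{2k}}(S)\big) = {\sh O}_{Y_i}$ by Lemma \ref{anticanonical}. Since $X$ is SNC and hence Gorenstein, $\omega_X$ is invertible and restricts to $\omega_{Y_i}(S_i)$ on each component; so it suffices to observe that the restriction $\mathrm{Pic}(X) \to \mathrm{Pic}(Y_1) \oplus \mathrm{Pic}(Y_2)$ is injective. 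This injectivity follows from the Mayer--Vietoris sequence associated to $0 \to {\sh O}_X^* \to {\sh O}_{Y_1}^* \oplus {\sh O}_{Y_2}^* \to {\sh O}_S^* \to 0$, because the map on global units ${\mathbb C}^* \oplus {\mathbb C}^* \to {\mathbb C}^*$ is surjective; hence $\omega_X \cong {\sh O}_X$.

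For (T3) I would run the additive Mayer--Vietoris sequence $0 \to {\sh O}_X \to {\sh O}_{Y_1} \oplus {\sh O}_{Y_2} \to {\sh O}_S \to 0$. Surjectivity of ${\mathbb C}\oplus{\mathbb C} \to {\mathbb C}$ on $\mathrm{H}^0$ identifies $\mathrm{H}^1(X,{\sh O}_X)$ with the kernel of $\mathrm{H}^1({\sh O}_{Y_1}) \oplus \mathrm{H}^1({\sh O}_{Y_2}) \to \mathrm{H}^1({\sh O}_S)$. Since blow-ups of smooth centers preserve $\mathrm{H}^1$ of the structure sheaf and $\pi : P_{2k} \to E$ is a ${\mathbb P}^2$-bundle, we get $\mathrm{H}^1({\sh O}_{Y_i}) = \mathrm{H}^1({\sh O}_E) = {\mathbb C}$, and the restriction to $\mathrm{H}^1({\sh O}_S) = \mathrm{H}^1({\sh O}_{E \times E}) = {\mathbb C}^2$ is pullback along $\varphi_i|_{S_i}$ composed with the K\"unneth decomposition. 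The key geometric point is that $\varphi_1|_{S_1}$ and $\varphi_2|_{S_2}$ are the two \emph{different} projections $\pi_1$ and $\pi_2$ --- precisely because the gluing is by the factor-swapping involution --- so the two contributions land in complementary K\"unneth summands and the combined map is injective, forcing $\mathrm{H}^1(X,{\sh O}_X)=0$. (Gluing by the identity would instead yield $\mathrm{H}^1(X,{\sh O}_X) = {\mathbb C}$, so the involution is essential.)

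The main obstacle is (T1). By the excerpt, d-semistability reduces to ${\sh N}_{S/Y_1} \otimes {\sh N}_{S/Y_2} \cong {\sh O}_S$ on the glued surface. Blowing up along $D_i \subset S$ twists the normal bundle of $S$ by $-D_i$, giving ${\sh N}_{S_i/Y_i} = {\sh N}_{S/P_{2k}} \otimes {\sh O}_S(-D_i)$; inserting Lemma \ref{normall} with $m = 2k$ yields, for instance, ${\sh N}_{S_1/Y_1} = {\sh O}_S\big((-2k^2-6k-9)\Delta - 6k L_{2,q} + (4k^2+6k)L_{1,q}\big)$. For $Y_2$ I would use (\ref{diags}) in the form $\Delta' \sim 2L_{1,q} + 2L_{2,q} - \Delta$ to rewrite ${\sh N}_{S_2/Y_2}$ in the basis $\Delta, L_{1,q}, L_{2,q}$. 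The subtle point is that the gluing is via the swap $\sigma$, so the condition to verify is ${\sh N}_{S_1/Y_1} \otimes \sigma^* {\sh N}_{S_2/Y_2} \cong {\sh O}_S$, and $\sigma^*$ exchanges $L_{1,q} \leftrightarrow L_{2,q}$ while fixing $\Delta$. A direct check then shows that all three coefficients cancel, which is exactly what the numerical choices $D_1 \sim (2k^2+12k+18)\Delta$ and $D_2 \sim 2k^2\Delta'$ are engineered to achieve; this establishes (T1) and completes the proof.
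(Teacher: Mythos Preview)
Your proposal is correct and follows essentially the same route as the paper: the Mayer--Vietoris argument for (T3), Lemma \ref{anticanonical} plus the blow-up formula for (T2), and the normal-bundle twist ${\sh N}_{S_i/Y_i} = {\sh N}_{S/P_{2k}}(-D_i)$ together with Lemma \ref{normall} and (\ref{diags}) for (T1). If anything, you are slightly more careful than the paper in two places: you make explicit the passage from $\omega_{Y_i}(S_i)\cong{\sh O}_{Y_i}$ to $\omega_X\cong{\sh O}_X$ via injectivity of $\mathrm{Pic}(X)\to\mathrm{Pic}(Y_1)\oplus\mathrm{Pic}(Y_2)$, and in (T1) you correctly track the swap $\sigma$ so that the identity being checked is ${\sh N}_{S_1/Y_1}\otimes\sigma^*{\sh N}_{S_2/Y_2}\cong{\sh O}_S$ (equivalently ${\sh N}_{S/P_{2k}}\otimes\sigma^*{\sh N}_{S/P_{2k}}\cong{\sh O}_S(D_1+D_2)$), whereas the paper's shorthand ``${\sh N}_{S/P_{2k}}^{\otimes 2}\cong{\sh O}_S(D_1+D_2)$'' is literally off by $(4k^2+12k)(L_{1,q}-L_{2,q})$ and only becomes correct after applying $\sigma$ to one factor.
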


\begin{proof} We simply need to verify conditions (T1)--(T3).

(T1) A straightforward calculation using (\ref{diags}) and Lemma \ref{normall} shows that
$$ {\sh N}_{S/P_{2k}}^{\otimes 2} \cong {\sh O}_{S}(D_1 + D_2), $$
hence the d-semistability condition is verified since ${\sh N}_{S/Y_i} = {\sh N}_{S/P_{2k}}(-D_i)$ and $D_2$ is invariant under the involution of $S$ exchanging the factors. 

(T2) This follows from Lemma \ref{anticanonical} and the fact that the proper transform of an anticanonical divisor remains anticanonical if the center of the blowup was contained inside the former divisor.

(T3) By the usual short exact sequence, it suffices to verify that the map 
$$ {\mathrm H}^*({\sh O}_{Y_1}) \oplus {\mathrm H}^*({\sh O}_{Y_2}) \longrightarrow {\mathrm H}^*({\sh O}_S) $$
is surjective in degree zero, which is trivial, and injective in degree one. The latter is a consequence of the following easy facts: the diagram
\begin{center}
\begin{tikzpicture}

\node (a) at (0,0) {${\mathrm H}^1({\sh O}_E)$};
\node (b) at (3,0) {${\mathrm H}^1({\sh O}_S)$};
\node (c) at (3,1.5) {${\mathrm H}^1({\sh O}_{Y_i})$};

\draw [->, thin] (a) -- (b);
\draw [->, thin] (a) -- (c);
\draw [->, thin] (c) -- (b);

\node at (1.5, 0.3) {$\pi_i^*$};
\node at (1.5, 1.1) {$\varphi_i^*$};

\end{tikzpicture}
\end{center}
is commutative, $\varphi_i^*$ is an isomorphism, essentially because the cohomology of the structure sheaf is a birational invariant, and $\pi_1^* \oplus \pi_2^*$ is an isomorphism. \end{proof}

\section{The genus zero stable maps}

\subsection{The rigidity requirement} Of course, the key basic observation is that, because there are no nonconstant maps from a rational curve to a genus one curve, all connected components of the relative stable maps are sent to (expansions of the) fibers of the maps $\varphi_i$.

Condition (C1) is proved completely analogously to \cite[Proposition 3.2]{[Za15]}. However, we point out that the argument ultimately is entirely equivalent to applying the following elementary lemma.

\begin{lem}
If $E \subset W$ is a smooth connected divisor on a smooth rational projective surface $W$ such that $E + K_W \sim 0$, then the image-cycles of maps from rational trees into $W$ which intersect $E$ in any predetermined divisor do not cover the surface $W$ birationally. 
\end{lem}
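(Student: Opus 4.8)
The plan is to show that the rational trees in question are too rigid to form a covering family: within each numerical class only finitely many of them can meet $E$ in the prescribed divisor, and a countable union of curves cannot cover a surface. First I would record that, by adjunction, $(K_W+E)\cdot E = 0$ forces $E$ to be a smooth elliptic curve; in particular no component of a rational tree maps into $E$, every image cycle meets $E$ properly in an effective divisor, and after fixing a group law on $E$ I may write $\sigma(D)\in E$ for the sum of a degree-$d$ divisor $D$. Since an image cycle of a rational tree decomposes as $\sum a_j B_j$ with each $B_j$ an irreducible rational curve, and since there are only countably many ways to distribute the fixed divisor $\delta$ and the (discrete) homology classes among the components, it suffices to treat a single irreducible rational curve $B$ in a fixed numerical class $\beta$ and bound the family of those $B$ with $B\cap E=\delta$.

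The core of the argument is a dimension count powered by the two hypotheses. Because $W$ is rational, $\mathrm{H}^1({\sh O}_W)=0$, hence $\mathrm{Pic}^0(W)=0$ and the restriction $\mathrm{Pic}(W)\to\mathrm{Pic}(E)$ factors through numerical equivalence. Therefore, for a fixed class $\beta$, the bundle ${\sh O}_W(\beta)|_E$ is a fixed element of $\mathrm{Pic}^d(E)$ with $d=\beta\cdot E$, so the sum $\sigma(B\cap E)$ equals one and the same point $\sigma(\beta)\in E$ for every rational curve $B$ of class $\beta$. Thus $\delta$ can occur at all only when $\sigma(\delta)=\sigma(\beta)$, and in that case the condition $B\cap E=\delta$ amounts to $d-1$, rather than $d$, independent constraints, the constraint on the sum of the $d$ intersection points being automatic. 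Imposed on the (expected) $(d-1)$-dimensional family of rational curves of class $\beta$, this leaves a family of expected dimension $0$. Conceptually this is forced by the hypothesis $E+K_W\sim 0$, which gives $c_1({\sh T}_W(-\log E))=-(K_W+E)=0$ and hence makes the space of genus-zero relative stable maps to the pair $(W,E)$ with prescribed contact orders and prescribed contact points of virtual dimension zero. Summing over the countably many classes $\beta$ with $\beta\cdot E=\deg\delta$, one obtains at most countably many image cycles, and a countable union of curves cannot cover the uncountable surface $W(\cc)$.

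The hard part is upgrading ``expected dimension $0$'' to an honest finiteness statement, that is, excluding an excess, positive-dimensional family of rational curves all sharing the same contact divisor $\delta$ for some special $\delta$ and some obstructed class. What makes this tractable is precisely the rigidity imposed by fixing $\delta$: in such a family the curve would move inside $W$ while its intersection with $E$ stayed pinned to the constant divisor $\delta$, so the contact divisor would not vary at all, even though the sum constraint alone would permit it to sweep out a full $(d-1)$-dimensional linear system on $E$. I would rule this out exactly as in \cite[Proposition 3.2]{[Za15]}: the relative obstruction theory of virtual dimension $0$ forces the relevant stable maps to be infinitesimally rigid, so they cannot move in a covering family. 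The qualifier ``birationally'' is a convenience that lets one first reduce a hypothetical covering family to one with generically finite evaluation, to which this rigidity applies most directly; the countability of the first step in any case prevents even a many-to-one covering, and reducible sources and multiple covers are absorbed by the componentwise reduction of the first paragraph.
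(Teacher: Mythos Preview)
Your dimension-count strategy diverges from the paper's argument and has a genuine gap at exactly the step you yourself flag as ``the hard part.'' Virtual dimension zero does \emph{not} force infinitesimal rigidity: the actual dimension of a moduli space can exceed its virtual dimension whenever the obstruction space is nonzero, and that is precisely the excess situation you must exclude. Your appeal to \cite[Proposition~3.2]{[Za15]} is effectively circular here, since the paper states that the present lemma is ``entirely equivalent'' to that proposition; invoking it amounts to assuming the conclusion. Without an independent argument ruling out positive-dimensional families of rational curves with fixed contact divisor, the countability reduction of your first two paragraphs never closes.

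The paper's proof bypasses dimension counts altogether with a short properness argument. Suppose a covering family exists and complete it semistably over a proper base. Since the image-cycles sweep out $W$, after completion some cycle passes through a chosen point of $E$ lying outside the fixed divisor $\delta$; but every map in the original family meets $E$ exactly in $\delta$, so the only way a limit cycle can meet $E$ off $\delta$ is for its image-cycle to contain $E$ as a component. That is impossible: adjunction gives $2g(E)-2=(K_W+E)\cdot E=0$, so $E$ is elliptic and cannot be dominated by any component of a rational tree. This degeneration-to-$E$ contradiction is the missing idea; it replaces your attempted upgrade of expected dimension to honest finiteness and is what makes the ``birationally'' hypothesis do real work.
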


\begin{proof} 
If such a family exists, consider a semistable completion over a proper base. Properness mandates that some image-cycle of a map in the complete family contains $E$, but this is clearly impossible.
\end{proof}

Assume by way of contradiction that (C2) fails. Let ${\mathbf q}$ be the fiber product of ${\mathbf q}_1$ and ${\mathbf q}_2$ over their common target. Then we may find a nonsingular connected affine curve $ \iota:Z \to  \overline{\modu M}({\mathfrak Y}_1^\mathrm{rel},\Gamma_1) \times_{S^r} \overline{\modu M}({\mathfrak Y}_2^\mathrm{rel},\Gamma_2)$ such that ${\mathbf q} \iota$ is not constant. For each $v \in V:=V(\Gamma_1) \sqcup V(\Gamma_2)$, let
$$ \lambda_v:Z \longrightarrow E $$
be the function which specifies the fiber of $\varphi_i$ containing the connected component of the source of $(f_i)_z$ corresponding to the vertex $v \in V(\Gamma_i)$. Moreover, by further shrinking $Z$ if necessary, we may assume that all $(f_i)_z$ map to the same expansion $Y_i[n_i]_0$ of $Y_i$ for $i=1,2$. 

For two vertices $v$ and $w$ which become adjacent after gluing the graphs in $\Gamma_1$ and $\Gamma_2$, denote by $[vw$ the root at $v$ ``pointing towards'' $w$. Let $R(v)$ be the set of all roots incident to $v$. Let
\begin{equation*}
\begin{aligned} 
R_0(v) & := \left\{ \rho \in R(v): (\pi_{\rho} {\mathbf q} \iota)(z) \notin D_1 \cup D_2 \right\} \\
R_+(v) & := \left\{ \rho \in R(v): (\pi_{\rho} {\mathbf q} \iota)(z) \in D_j \backslash D_i \right\} \\
\end{aligned}
\end{equation*}
where $\pi_\rho:S^r \to S$ is the projection corresponding to the root $\rho$, and let $R_{\geq 0}$ be their union. Shrinking $Z$ even more, we may assume that the properties whether $(\pi_{\rho} {\mathbf q} \iota)(z) \in D_1$ or $D_2$ or neither hold either for all $z \in Z$ or for none. 

Assume that $v \in V(\Gamma_i)$. Let $d_v \geq 0$ be the degree of the pullback of ${\sh O}_{P_{2k}}(1)$ to the connected component corresponding to $v $. Let $\smash{ L_v = \pi_i^{-1}(\lambda_v(z)) \subset S }$ and $\smash{ P_v = \pi^{-1}(\lambda_v(z))}$ living inside $P_{2k}$. Let $C_v \subset P_v$ be the image-cycle of the restriction of the map $f_i$ to the connected component corresponding to the vertex $v$, so that $C_v$ has degree $d_v$. For each $\alpha \in L_v$, let 
$$ m_{v,\alpha} = (C_v \cdot L_v)_{\alpha,P_v}, $$
the intersection multiplicity at $\alpha$. If $\alpha \in L_v \cap D_i$, let $\hat{\lambda}_{v,\alpha}(z) = \pi_j(\alpha) \in E$, where $\{j\} = \{1,2\} \backslash \{i\}$. 

\begin{lem}\label{linear equivalence} The following linear equivalence holds
\begin{equation}\label{main}
\sum_{[vw \in R_{\geq 0}(v)} m_{[vw} \lambda_w(z) + \sum_{ \alpha \in L_v \cap D_i} m_{v,\alpha}\hat{\lambda}_{v,\alpha}(z) \sim_{\mathrm{rat}} d_v((2k+3)\lambda_v(z) - 2kq). 
\end{equation}
In particular, the sum of the coefficients on the left is $3d_v$.
\end{lem}

\begin{proof}
The basic idea is that the hyperplane divisor class of $P_v$ restricts on $L_v \cong E$ to ${\sh O}_E((2k+3)\lambda_v(z) - 2kq)$. Let $p_{[vw} \in L_v \subset S$ be the image of the distinguished marked point corresponding to the root $[vw$. Because all bubbles of the expansions are ${\mathbb P}^1$-bundles over $S$, condition (T4) and the admissibility condition \emph{along the chain of bubbles} imply that for each closed point $\alpha \in L_v \backslash D_i$,
$$ \sum_{p_{[vw} = \alpha} m_{[vw} = m_{v,\alpha}  $$ 
and (\ref{main}) follows after summing over all $\alpha \in L_v$. \end{proof}

Fix $z \in Z(\cc)$ and a nonzero tangent vector $\mathbf{u}$ to $Z$ at $z$. For each $v \in V$, let
$$ \lambda_v'=\frac{\mathrm{d}\lambda_v}{\mathrm{d}z} := \mathrm{d}\lambda_v({\mathbf u}) \in \cc, $$
where a trivialization of ${\sh T}_E$ has been fixed beforehand. The derivative of $\smash{ \hat{\lambda}_{v,\alpha}(z) }$ can be defined similarly. However, note that $\smash{ \hat{\lambda}'_{v,\alpha} \in \{\pm \lambda'_v\} }$ and also $\lambda'_w \in \{\pm \lambda'_v\}$ if $[vw \in R_+(v)$. Taking derivatives in (\ref{main}), we obtain
\begin{equation*}
d_v(2k+3)\frac{\mathrm{d}\lambda_v}{\mathrm{d}z} = \sum_{[vw \in R_{\geq 0}(v)} m_{[vw} \frac{\mathrm{d}\lambda_w}{\mathrm{d}z} + \sum_{ \alpha \in L_v \cap D_i} m_{v,\alpha}\frac{\mathrm{d}\hat{\lambda}_{v,\alpha}}{\mathrm{d}z}.
\end{equation*}
The key idea is to choose $v \in V$ for which $|\lambda'_v|$ is \emph{maximal}. It is possible to choose such a $v$ such that $d_v \neq 0$ thanks to the kissing condition, the connectivity of the glued graph and the fact that $D_1 \cap D_2$ is finite. By the triangle inequality,
\begin{equation*}
\begin{aligned}
d_v(2k+3) \left| \frac{\mathrm{d}\lambda_v}{\mathrm{d}z} \right| & \leq \sum_{[vw \in R_{\geq 0}(v)} m_{[vw} \left| \frac{\mathrm{d}\lambda_w}{\mathrm{d}z} \right| + \sum_{ \alpha \in L_v \cap D_i} m_{v,\alpha} \left| \frac{\mathrm{d}\hat{\lambda}_{v,\alpha}}{\mathrm{d}z} \right| \\
& \leq \left[ \sum_{[vw \in R_{\geq 0}(v)} m_{[vw} + \sum_{ \alpha \in L_v \cap D_i} m_{v,\alpha} \right] \left| \frac{\mathrm{d}\lambda_v}{\mathrm{d}z} \right| = 3d_v \left| \frac{\mathrm{d}\lambda_v}{\mathrm{d}z} \right|,
\end{aligned}
\end{equation*}
hence $\lambda'_v = 0$ for the chosen $v$ and thus $\lambda'_v = 0$ for all $v$. Then all $\lambda_v$ are constant which contradicts the assumption that ${\mathbf q} \iota$ is not constant. This completes the proof that (C2) is satisfied.

\subsection{The existence requirement} Finally, we need to check that condition (C3) holds. There is a lot of leeway in constructing the desired pairs of relative stable maps. Below is one possibility. 

We start by introducing some overdue notation. For simplicity, write $N_1 = 2k^2+12k+18$ and $N_2 = 2k^2$. Let $D_1 = \Delta_1+\Delta_2+...+\Delta_{N_1}$ and $D_2 = \Delta'_1+\Delta'_2+...+\Delta'_{N_2}$,
\begin{equation*}
\begin{aligned}
\Delta_j &=\{(p_1,p_2) \in S(\cc): p_1-p_2 \sim q_j-q\}^\mathrm{cl}, \\
\Delta'_j &=\{(p_1,p_2) \in S(\cc): p_1+p_2 \sim q'_j+q\}^\mathrm{cl}.
\end{aligned}
\end{equation*}
All that we need from the assumption that the translates are very general is that
\begin{equation}\label{sole}
q_1+q_2+...+q_{N_1} \sim_E N_1q \text{ and } q'_1+q'_2+...+q'_{N_2} \sim_E N_2q
\end{equation}
and their combinations are the only nontrivial relations involving the $q$s.

We will construct a chain of lines in $X$. More accurately, we desire to find curves $\ell_1,\ell_3,...,\ell_{n-1} \subset Y_1$ and $\ell_2,\ell_4,...,\ell_n \subset Y_2$ and points $p_1,p_2,...,p_{n-1} \in S$ satisfying the following properties:

$\bullet$ $p_j \in \ell_j \cap \ell_{j+1}$ for all $j$;

$\bullet$ $\ell_1$ is the proper transform under the blowup $Y_1 \to P_{2k}$ of a line inside some fiber of $\pi$ and $\ell_1$ intersects the exceptional divisors corresponding to $\Delta_1$ and $\Delta_2$;

$\bullet$ $\ell_j$ is the proper transform under the blowup $Y_i \to P_{2k}$ of a line inside some fiber of $\pi$ and $\ell_j$ intersects the exceptional divisor corresponding to $\Delta_3$ or $\Delta'_3$, for $2 \leq j \leq n-1$;   

$\bullet$ $\ell_n$ is the proper transform under the blowup $Y_2 \to P_{2k}$ of a line inside some fiber of $\pi$ and $\ell_n$ intersects the exceptional divisors corresponding to $\Delta'_1$ and $\Delta'_2$;

$\bullet$ all $\ell_j$ live inside different fibers of $\varphi_1$ and $\varphi_2$. 
\newline The only closed conditions come from Lemma \ref{linear equivalence}, which in this case reads
\begin{equation}\label{mess}
(2k+3) \lambda_j - 2kq \sim_E
\begin{cases}
2\lambda_1 + \lambda_2 + 2q -q_1-q_2 & \text{if } j=1, \\
\lambda_{j-1} -\lambda_j + \lambda_{j+1} +q + q'_3 & \text{if } j \leq n-2 \text{ even}, \\
\lambda_{j-1} +\lambda_j + \lambda_{j+1} +q - q_3 & \text{if } j \geq 3 \text{ odd}, \\
\lambda_{n-1} - 2\lambda_n + q'_1+ q'_2 +2q & \text{if } j = n. \\
\end{cases}
\end{equation}
Let $f_1$ and $f_2$ be the inclusions
\begin{equation*}
\begin{aligned}
f_1 &: \ell_1 \sqcup \ell_3 \sqcup \ell_5 \sqcup ... \hookrightarrow Y_1 \\
f_2 &: \ell_2 \sqcup \ell_4 \sqcup \ell_6 \sqcup ... \hookrightarrow Y_2.
\end{aligned}
\end{equation*}
The fact that the construction above is sound, which amounts to saying that the points $p_j$ don't belong to the exceptional divisors of the blowups $Y_i \to P_{2k}$, and that the conditions in (C3) are satisfied turns out to boil down to the following straightforward computation due to the uniqueness of the relations (\ref{sole}): the $n \times n$ symmetric tridiagonal matrix 
$$\begin{bmatrix*}[c]
2k+1 & -1 & 0  & 0 & 0 & \cdots & 0 & 0 & 0 \\
-1 & 2k+4 & -1 & 0 & 0 & \cdots & 0 & 0 & 0 \\
0 & -1 & 2k+2 & -1 & 0 & \cdots & 0 & 0 & 0 \\
0 & 0 & -1 & 2k+4 & -1 & \cdots & 0 & 0 & 0 \\
0 & 0 & 0 & -1 & 2k+2 & \cdots & 0 & 0 & 0 \\
\vdots  & \vdots  & \vdots  & \vdots  & \vdots  & \ddots & \vdots  & \vdots  & \vdots \\
0 & 0 & 0  & 0 & 0 & \cdots & 2k+4 & -1 & 0 \\
0 & 0 & 0  & 0 & 0 & \cdots & -1 & 2k+2 & -1 \\
0 & 0 & 0  & 0 & 0 & \cdots & 0 & -1 & 2k+5 \\
\end{bmatrix*}
$$
is invertible and all the entries in the first row/column of the inverse matrix are pairwise distinct. The calculation is uninteresting and as such will be skipped.

\bigskip

\noindent \textit{Acknowledgements.} This paper is morally related to the earlier work \cite{[Za15]} and I would like to reiterate my thanks to Joe Harris, Xi Chen and the others acknowledged in that paper. I would also like to thank Brian Osserman for many valuable suggestions and Andreas Knutsen for useful discussions.

\end{document}